\newtheorem{thm}{Theorem}
\newtheorem{lem}[thm]{Lemma}
\newtheorem{prop}[thm]{Proposition}
\theoremstyle{definition}
\def \R {\mathbb R}
\def \D {\mathcal D}
\def \M {\mathcal M}
\def \K {\mathcal K}
\def \dvol {\text{dVol}}
\begin{document}
\title{A short proof of the surjectivity of the period map on K3 manifolds}
\author{Hongyi Liu}
\date{}
\maketitle
\begin{abstract}
    In this note, we give a simple proof of the Todorov's surjectivity result on the period map of K3 surfaces in a differential geometric setting. Our proof makes use of collasping geometry of hyperk\"ahler 4-manifolds developped by Sun-Zhang in \cite{sun2021collapsing}, and does not rely on the solution to the Calabi conjecture.
    
\end{abstract}

On an oriented smooth 4-manifold, a hyperk\"ahler metric $g$ is a Riemannian metric with holonomy contained in $SU(2)$. This is equivalent to saying that the bundle of self-dual forms is flat and trivial, which implies that there is a triple $\bm\omega=(\omega_1, \omega_2, \omega_3)$ of closed 2-forms  satisfying  $\omega_\alpha\wedge\omega_\beta=2\delta_{\alpha\beta} \dvol_g$. Such a triple is often referred to as a  hyperk\"ahler triple.

Let $\mathcal K$ be the K3 manifold, which is by definition the unique oriented smooth 4-manifold underlying a complex K3 surface. It is simply connected and the intersection form on  $\Lambda\equiv H^2(\K; \mathbb Z)$ has signature $(3, 19)$. 
Denote by $\mathcal N$ the set of all hyperk\"ahler metrics on $\mathcal K$ with diameter 1. 
  Given $g\in \mathcal N$,  the space $\mathbb H^+_g$ of self-dual harmonic 2-forms with respect to $g$ is a 3-dimensional subspace  in $\Lambda_{\mathbb R}\equiv H^2(\mathcal K; \mathbb R)$ which is positive definite with respect to the intersection form. Indeed, any choice of a hyperk\"ahler triple gives rise to a basis of $\mathbb H_g^+$, and they are up to a constant $O(3)$ rotation.  

Define the positive Grassmannian $Gr^{+}$ to be the space of all 3-dimensional positive definite subspaces of $\Lambda_{\mathbb R}$. It is an open subset in the standard Grassmannian $Gr(3, \Lambda_\R)$. 
 We define the period map
$$\mathcal P: \mathcal N\rightarrow  Gr^{+}; g\mapsto \mathbb H_g^{+}.$$
The diffeomorphism group $\textbf{Diff}(\K)$ acts on $\mathcal N$ by $\varphi.g=\varphi^*g$, which induces a homomorphism $\Phi: \textbf{Diff}(\K)\rightarrow \text{Aut}(\Lambda)$, where $\Gamma:=\text{Aut}(\Lambda)$ is the automorphism group of the lattice $\Lambda$ preserving the intersection form. 
There is a natural action of  $\Gamma$  on $Gr^{+}$, hence $\mathcal{P}$ induces a map
\begin{equation}\label{period}
     \underline{\mathcal P}: \mathcal M=\mathcal N/\textbf{Diff}(\mathcal K)\rightarrow \mathcal D\equiv Gr^{+}/\Gamma.
\end{equation}
The left-hand side is the set of isometry classes of hyperk\"ahler metrics on $\mathcal K$. It is endowed with a natural Cheeger-Gromov topology.
A sequence $[g_j]$ converges to $[g_\infty]$ if there are $\varphi_j\in \textbf{Diff}(\mathcal K)$ such that $\varphi_j^*g_j$ converges smoothly to $g_\infty$.
Since hyperk\"ahler metrics are Ricci-flat, it follows from the Cheeger-Colding theory that this topology coincides with the Gromov-Hausdorff topology. We also endow the period domain $\mathcal D$ with the quotient topology. One can check that $\underline{\mathcal P}$ is continuous, and the $\Gamma$ action on $Gr^+$ is properly discontinous.
 
 For any homology class $\delta\in H_2(\mathcal{K}; \mathbb{Z})\cong H^2(\mathcal{K};\mathbb{Z})=\Lambda$ with $\delta.\delta=-2$, we define $\xi^{\perp}$ to be subspace in $Gr^{+}$ consisting of hyperk\"ahler metrics $g$ such that $\int_{\delta}\xi=0$ for all $\xi\in \mathbb H^+_g$. We denote $$Gr^{+, \circ}=Gr^{+}\setminus \bigcup_{\delta\in \Lambda, \delta.\delta=-2} \delta^{\perp}$$
  and $\mathcal D^{\circ}=Gr^{+, \circ}/\Gamma$.

\begin{thm}\label{main theorem}
The image of $\underline{\mathcal P}$ is $\D^\circ$.

\end{thm}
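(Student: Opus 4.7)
The plan is a continuity-method argument with four ingredients: $\underline{\mathcal{P}}$ actually lands in $\mathcal{D}^\circ$, it is open, it is proper onto $\mathcal{D}^\circ$, and its image is nonempty; since $\mathcal{D}^\circ$ is connected, an open, closed and nonempty subset must then be all of $\mathcal{D}^\circ$.

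First I would verify the inclusion $\text{image}(\underline{\mathcal{P}}) \subseteq \mathcal{D}^\circ$. If some $g \in \mathcal{N}$ admitted a class $\delta \in \Lambda$ with $\delta \cdot \delta = -2$ orthogonal to $\mathbb{H}^+_g$, then for any complex structure $I$ in the twistor family of $g$ the class $\delta$ is of type $(1,1)$ on $(\mathcal{K}, I)$ and satisfies $\int \delta \wedge \omega_I = 0$. Riemann--Roch on this K\"ahler K3 surface forces $\delta$ or $-\delta$ to be effective, and $\int_C \omega_I = 0$ for the effective curve $C$ contradicts positivity of the K\"ahler form on a complex curve. Openness of $\underline{\mathcal{P}}$ is a local Torelli statement for hyperk\"ahler triples: after killing the diffeomorphism action by Moser's trick, nearby hyperk\"ahler structures are parameterized by closed triples $(\omega_\alpha')$ with $\omega_\alpha' \wedge \omega_\beta' = 2\delta_{\alpha\beta} \mu'$ modulo the residual $O(3)$ rotation, and an implicit function theorem identifies the infinitesimal moduli with the tangent space of $Gr^+$ at $\mathbb{H}^+_g$.

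Properness onto $\mathcal{D}^\circ$ is where the Sun--Zhang collapsing theory \cite{sun2021collapsing} enters, and I expect this to be the main difficulty. Given $[g_j] \in \mathcal{M}$ with $\underline{\mathcal{P}}([g_j]) \to p_\infty \in \mathcal{D}^\circ$, after passing to a subsequence the Sun--Zhang structure theorem produces a Gromov--Hausdorff limit that is either a smooth hyperk\"ahler K3, a hyperk\"ahler orbifold with ADE singularities, or a lower-dimensional collapsed space. Each non-smooth outcome comes packaged with one or more integer classes of self-intersection $-2$ whose pairings against $\mathbb{H}^+_{g_j}$ tend to zero: vanishing cycles around ADE points in the orbifold case, and areas of rational curves being crushed in the collapsed case. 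Such a class would be orthogonal to $p_\infty$, violating $p_\infty \in \mathcal{D}^\circ$; so the limit must be smooth and realizes $p_\infty$ as a period.

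Finally, nonemptiness of the image is supplied without Calabi by the classical Kummer gluing: on $T^4/\mathbb{Z}_2$ one replaces each of the sixteen $A_1$ singularities with a rescaled Eguchi--Hanson metric, producing a closed approximate hyperk\"ahler triple that is perturbed to an honest one by a linear elliptic correction on the self-dual harmonic space. Combining these four points with connectedness of $\mathcal{D}^\circ$ gives $\text{image}(\underline{\mathcal{P}}) = \mathcal{D}^\circ$. The core technical step, and the one I expect to consume the bulk of the paper, is a clean packaging of the Sun--Zhang collapsing classification so that every mode of non-smooth degeneration produces the vanishing of a single $(-2)$-period in the limit.
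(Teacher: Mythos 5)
Your overall architecture coincides with the paper's: show the image lies in $\D^\circ$ (Step 1, by the same Riemann--Roch/effectivity argument), show $\mathcal P$ is open (Step 2, by the same implicit-function-theorem deformation of the triple), show $\underline{\mathcal P}$ is proper onto $\D^\circ$ (Step 3, via Sun--Zhang), and conclude by connectedness of $\D^\circ$. Your explicit inclusion of nonemptiness via the Kummer gluing is a reasonable addition that the paper leaves implicit.

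However, there is a concrete flaw in how you propose to execute the properness step. You assert that ``every mode of non-smooth degeneration produces the vanishing of a single $(-2)$-period in the limit,'' and that such a class ``would be orthogonal to $p_\infty$, violating $p_\infty\in\mathcal D^\circ$.'' This is not what happens in the collapsed cases, and your mechanism would fail there. When $\mathrm{Vol}(g_j)\to 0$ and $\dim X_\infty\in\{1,2,3\}$, the classes with vanishing renormalized periods that the Sun--Zhang local structure hands you are torus fibers or total spaces of circle bundles over circles; these have self-intersection $0$, not $-2$. An isotropic class orthogonal to $p_\infty$ is not excluded by the definition of $\mathcal D^\circ$ (which only removes $\delta^\perp$ for $\delta.\delta=-2$); the contradiction instead comes from the signature: a nonzero class orthogonal to the positive $3$-plane $p_\infty$ lies in a negative-definite $19$-dimensional subspace and hence cannot be isotropic. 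A second issue you gloss over is that the periods vanish with respect to $\widetilde{\bm\omega}_j$, whereas $p_\infty$ is the limit of $\gamma_j.\mathcal P(g_j)$ for lattice automorphisms $\gamma_j$; one must first show $\|\gamma_j.[\widetilde{\bm\omega}_j]\|$ stays bounded (again a signature argument applied to a renormalized limit), and then treat separately the cases where $\|\gamma_j^{-1}.C_j\|$ is bounded (limit is an integral $-2$ class orthogonal to $p_\infty$, contradicting $p_\infty\in\mathcal D^\circ$) or unbounded (the normalized limit is isotropic and orthogonal to $p_\infty$, contradicting the signature). Without these two repairs --- the signature argument for isotropic vanishing classes and the control of the $\gamma_j$-twisting --- your properness argument does not close.
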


\textbf{Acknowledgements.} The author thanks Song Sun for suggesting the problem and pointing out that the results in \cite{sun2021collapsing} may help in this problem.
The author is supported by Simons Collaboration Grant on Special Holonomy in Geometry, Analysis, and Physics (488633, S.S.).

\

We prove Theorem \ref{main theorem} in a few steps. 

\

{\textbf {Step 1}}. We show that the image of $\mathcal P$ is contained in $Gr^{+, \circ}$. In particular, the image of $\underline{\mathcal P}$ is contained in $\mathcal D^\circ$.

\

The proof that we know uses some complex geometry. Suppose
 there is a $g\in \mathcal N$ and $0\neq \delta\in H_2(\mathcal K;\mathbb Z)$ such that  $\delta.\delta\geq -2$ and $\int_{\delta}\xi=0$ for all $\xi\in \mathbb H_g^+$. Choose a  hyperk\"ahler triple $\bm\omega=(\omega_1, \omega_2, \omega_3)$ for $g$. Then with respect to a compatible complex structure $J$ we see $\omega_1$ is a K\"ahler form, and $\Omega=\omega_2+\sqrt{-1}\omega_3$ is a holomorphic volume form.  By the Hodge decomposition it follows that $\delta$ is a $(1,1)$ class with respect to $J$. So $\delta=c_1(L)$ for some non-trivial holomorphic line bundle $L$. By the Hirzebruch-Riemann-Roch theorem it follows that $h^0(\mathcal K, L)+h^0(\mathcal K, L^{-1})=h^1(\mathcal K, L)+2+\frac{1}{2}\delta^2>0$. Without loss of generality  we assume $L$ has a non-zero holomorphic section $S$. Its zero set is a complex curve dual to $\delta$. It follows that $\int_\delta\omega>0$. Contradiction.

\

\textbf{Step 2}: We show that ${\mathcal P}$ is an open map. In particular, the image of $\underline{\mathcal P}$ is open.

\

This follows from the standard deformation theory. We outline the argument, see \cite{sun2021collapsing} for details. Suppose $g\in \mathcal N$. We fix a hyperk\"ahler triple $\bm\omega$ associated to $g$. In the following, we will often identify an element in $\Omega_+^2\otimes\mathbb{R}^3$ (i.e., a triple of self-dual 2-forms) with a 3×3 matrix-valued function in $\Omega^0\otimes\mathbb{R}^{3\times 3}$: 
 $\bm\eta\in\Omega_+^2\otimes{\mathbb{R}^3}$ corresponds to $\bm A=(A_{\alpha\beta})\in\Omega^0\otimes\mathbb{R}^{3\times 3}$
if $\eta_\alpha=\sum_{\beta=1}^3 A_{\alpha\beta}\omega_\beta$, or concisely $\bm\eta= \bm A.\bm\omega$. We claim that for any fixed small triple of anti-self-dual harmonic 2-forms $\bm h^-\in \mathbb{H}_{g}^-\otimes\mathbb{R}^3$, $\bm\omega':=\bm\omega+{\bm h}^-+{\bm h}^++dd^*(\bm{f}.\bm\omega)$ defines a hyperk\"ahler triple for some small $(\bm h^+,\bm f)\in \mathbb{H}_{g}^+\otimes\mathbb{R}^3\oplus C^{2,\gamma}(\Omega_+^2\otimes\mathbb{R}^3)$.

 Denote $\mathscr{S}_0(\mathbb{R}^3)$ the set of trace-free $3\times 3$ symmetric matrices, and $\mathfrak{F}$ the inverse of the map $\mathscr{S}_0(\mathbb{R}^3)\rightarrow \mathscr{S}_0(\mathbb{R}^3)$, $A\mapsto \text{tf}(A+A^T+AA^T)$ near 0, where $\text{tf}(B)=B-\frac{1}{3}\text{Tr}(B)$. Denote $\mathfrak{A}\subset C^{2,\gamma}(\Omega_+^2\otimes\mathbb{R}^3)$, $\mathfrak{B}\subset C^{\gamma}(\Omega_+^2\otimes\mathbb{R}^3)$, $\mathfrak{C}\subset \mathbb{H}_{g}^+\otimes\mathbb{R}^3$ denote the subspace consisting of trace-free symmetric matrices, respectively. For $\bm u=(\bm h^+,\bm f)\in\mathfrak{C}\oplus\mathfrak{A}$, define $\mathscr{F}:\mathfrak{C}\oplus\mathfrak{A}\rightarrow \mathfrak{B}$ by $\mathscr{F}(\bm u):={\bm h}^++d^+d^*(\bm f.\bm\omega)-\mathfrak{F}(-\text{tf}(S_{{\bm h}^-+d^-d^*(\bm f.\bm \omega)}))$, where $S_{\bm \theta^{-}}=(\theta^-_\alpha\wedge\theta^-_\beta/2\text{dVol}_g)$. The condition $\bm\omega'$ being hyperk\"ahler is equivalent to the equation $\mathscr{F}(\bm u)=0$.

To solve the equation, we write $\mathscr{F}(\bm u)=\mathscr{L}(\bm u)+\mathscr{N}(\bm u)$,  where $\mathscr{L}(\bm u)={\bm h}^++d^+d^*(\bm f.\bm \omega)=\bm h^++(\Delta_{g}\bm f).\bm\omega$, $\mathscr{N}(\bm u)=-\mathfrak{F}(-\text{tf}(S_{{\bm h}^-+d^-d^*(\bm f.\bm \omega)}))$. Then by standard elliptic theory that $\mathscr{L}$ is a bounded linear map which is surjective with a bounded right inverse, and $\|\mathscr{N}(\bm u)-\mathscr{N}(\bm v)\|\leq C(\| \bm h^-\|+\|\bm u\|+\|\bm v\|)(\|\bm u-\bm v\|)$. Then the implicit function theorem implies that there exists a $\delta>0$ such that for any $\bm \|\bm h^-\|< \delta$, $\mathscr{F}(\bm u)=0$ has a solution $\bm u$ with $\bm \|\bm u\|< C(\delta)$, which finishes the proof of the claim. Now the map $\Psi: \mathbb{H}_{g }^-\otimes\mathbb{R}^3\rightarrow Gr^+$, $\bm h^-\mapsto \text{span}\{\bm \omega+\bm h^-\}$ defines a homeomorphism from a neighborhood of 0 to a neighborhood of $\mathcal{P}(g)$, and $\text{span}\{\bm \omega+\bm h^-\}=\text{span}\{\bm \omega+\bm h^-+\bm h^++dd^*(\bm f.\bm \omega)\}$ as elements in $Gr^+$, it follows that the image of a neighborhood of $g$ under $\mathcal{P}$ contains a neighborhood of $\mathcal{P}(g)$, hence $\mathcal{P}$ is an open map.

\

\textbf{Step 3.}
We show that $\underline {\mathcal P}: \M\rightarrow \D^\circ $ is a proper map. 

\

 Suppose otherwise, then we can find a sequence of hyperk\"ahler metrics $g_j\in \mathcal N$ which do not converge smoothly modulo $\textbf{Diff}(\mathcal K)$, but
 there exist $\gamma_j\in \Gamma$ such that $\gamma_j. \mathcal P(g_j)$ converges to a positive 3-dimensional subspace $P_\infty$ in $ Gr^{+,\circ}$. Choose a  hyperk\"ahler triple $\bm\omega_j=(\omega_{j,1}, \omega_{j,2}, \omega_{j,3})$ for $g_j$.  Denote  $v_j^4=2\text{Vol}(g_j)$.
 We define the renormalized triple $\widetilde{\bm\omega}_j=v_j^{-2}\bm\omega_j$, then $\int_{\mathcal K}\widetilde\omega_{j,\alpha}\wedge\widetilde\omega_{j,\beta}=\delta_{\alpha\beta}$. 
 
\

Now we fix an norm $\|\cdot\|$ on $\Lambda_\mathbb R$. By abusing notation we also denote by $\|\cdot\|$ the standard norm on $\mathbb R^3$, or the induced norm on $\Lambda_\mathbb{R}\otimes\mathbb{R}^3$.
 
\

 \begin{lem} $\|\gamma_j.[\widetilde{\bm\omega}_j]\|$ is uniformly bounded. 
 \end{lem}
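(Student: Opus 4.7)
The key observation is that the renormalized triple $[\widetilde{\bm\omega}_j]$ is an orthonormal frame of $\mathbb H^+_{g_j}$ with respect to the intersection form: the identity $\int_{\mathcal K}\widetilde\omega_{j,\alpha}\wedge\widetilde\omega_{j,\beta}=\delta_{\alpha\beta}$ is precisely the statement that $q([\widetilde\omega_{j,\alpha}],[\widetilde\omega_{j,\beta}])=\delta_{\alpha\beta}$, where $q$ denotes the intersection form on $\Lambda_{\mathbb R}$. Since $\gamma_j\in\Gamma$ preserves $q$ and carries $\mathbb H^+_{g_j}$ to $\gamma_j.\mathbb H^+_{g_j}$, the transformed triple $\gamma_j.[\widetilde{\bm\omega}_j]$ is again a $q$-orthonormal frame, now of $\gamma_j.\mathbb H^+_{g_j}$. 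The lemma thus reduces to the assertion that $q$-orthonormal frames of positive $3$-planes in a small neighborhood of $P_\infty$ in $Gr^+$ have uniformly bounded $\|\cdot\|$-norm.

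To make this quantitative, I would compare $\gamma_j.[\widetilde{\bm\omega}_j]$ to a convergent reference frame. Fix any $q$-orthonormal basis $(e_1^\infty,e_2^\infty,e_3^\infty)$ of $P_\infty$. Since $\gamma_j.\mathbb H^+_{g_j}\to P_\infty$ in $Gr^+$, one can pick bases $(e_1^j,e_2^j,e_3^j)$ of $\gamma_j.\mathbb H^+_{g_j}$ with $e_\alpha^j\to e_\alpha^\infty$ in $\|\cdot\|$. Applying Gram--Schmidt with respect to $q$ (which remains positive definite on $\gamma_j.\mathbb H^+_{g_j}$ for large $j$) yields a $q$-orthonormal frame $(f_1^j,f_2^j,f_3^j)$ of $\gamma_j.\mathbb H^+_{g_j}$ still converging to $(e_1^\infty,e_2^\infty,e_3^\infty)$; in particular $\|f_\alpha^j\|$ is uniformly bounded.

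Finally, any two $q$-orthonormal frames of the same subspace differ by an element of $O(3)$, so one can write $\gamma_j.[\widetilde\omega_{j,\alpha}]=\sum_\beta R^j_{\alpha\beta}f_\beta^j$ for some $R^j\in O(3)$. Compactness of $O(3)$ bounds $|R^j_{\alpha\beta}|$ independently of $j$, and combining this with the bound on $\|f_\beta^j\|$ gives the desired uniform bound on $\|\gamma_j.[\widetilde{\bm\omega}_j]\|$.

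\textbf{Main obstacle.} There is no essential obstacle in this step; it is a direct compactness argument on the Grassmannian that uses only the normalization of the renormalized triple and the fact that $\Gamma$ acts by $q$-isometries. The substantive geometric content (the collapsing theory of \cite{sun2021collapsing}) is needed only in the subsequent steps of the proof, where one must upgrade cohomological convergence to smooth convergence of metrics.
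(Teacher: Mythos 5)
Your argument is correct and rests on exactly the same ingredients as the paper's: the $q$-orthonormality $\int_{\mathcal K}\widetilde\omega_{j,\alpha}\wedge\widetilde\omega_{j,\beta}=\delta_{\alpha\beta}$, the $q$-invariance of $\Gamma$, and the positive definiteness of the intersection form on the limit plane $P_\infty$. The only difference is presentational: the paper argues by contradiction, rescaling an unbounded sequence to produce a nonzero null vector inside the positive plane $P_\infty$, whereas you run the same compactness on $Gr^+$ directly via Gram--Schmidt and the $O(3)$-torsor structure of orthonormal frames.
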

 \begin{proof}Otherwise, by passing to a subsequence and  $O(3)$ rotations we may assume only the first component of $\gamma_j.[\widetilde{\bm\omega}_j]$ is non-zero and $\|\gamma_j.[\widetilde{\bm\omega}_{j}]\|=\lambda_j\rightarrow\infty$. Denote $\zeta_j=\lambda_j^{-1}\gamma_j.[\widetilde{\bm\omega}_{j}]$, then passing to a subsequence we may assume $\zeta_j$ converges to an element $\zeta_\infty$ in $\Lambda_{\mathbb R}$ with $\|\zeta_\infty\|=1$ and with $\zeta_\infty\cup \zeta_\infty=0$. But the line spanned by $\zeta_\infty$ is contained in $P_\infty$ which is positive definite with respect to the intersection form. Contradiction.
 \end{proof}
 
  Given the Lemma, by passing to a subsequence we may assume $\gamma_j.[\widetilde{\bm\omega}_j]$ converges to a limit $\bm{\eta}_\infty$ in $\Lambda_{\mathbb R}\otimes \mathbb R^3$. Notice $[\eta_{\infty,\alpha}]\cup [\eta_{\infty,\beta}]=\delta_{\alpha\beta}$, so $\bm\eta_\infty$ forms a basis for $P_\infty$.

\begin{prop}
	Passing to a subsequence, for $j$ large, there exists a non-zero homology class $C_j\in H_2(\mathcal K, \mathbb Z)$ satisfying $C_j.C_j\in \{0, -2\}$, and $\|\int_{C_j}\widetilde {\bm \omega}_j\|\rightarrow 0$.
\end{prop}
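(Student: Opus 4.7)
The strategy is to extract a vanishing 2-cycle from the failure of smooth convergence of $g_j$ using the compactness and collapsing theory for hyperk\"ahler 4-manifolds developed in \cite{sun2021collapsing}. A class $C_j$ with $C_j.C_j=-2$ will arise from a $(-2)$-sphere in a bubbling ALE region, while $C_j.C_j=0$ will arise from a collapsing torus fiber in the collapsed setting.

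\textbf{Step 1 (subsequential limit).} Since the $g_j$ are Ricci-flat with diameter $1$, Bishop--Gromov bounds $v_j^4=2\Vol(g_j)$ from above, and Gromov's compactness theorem produces a subsequence of $(\K, g_j)$ converging in the Gromov--Hausdorff sense to some compact length space $X_\infty$. \textbf{Step 2 (structure of the singular degeneration).} Because $g_j$ does not converge smoothly in $\M$ by assumption, the above GH-convergence must be singular. Invoking the structure theorem of Sun--Zhang yields, after a further subsequence, one of two scenarios: (i) \emph{non-collapsed}, where $v_j\to v_\infty>0$ and bubbles form at finitely many points of $X_\infty$, each a non-trivial hyperk\"ahler ALE $4$-manifold; or (ii) \emph{collapsed}, where $v_j\to 0$ and the GH-limit is of dimension $\leq 3$, with generic fibers modeled on flat tori (or flat nilmanifolds in the deepest collapses).

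\textbf{Step 3 (extract the cycle).} In case (i), by Kronheimer's classification each ALE bubble contains a configuration of holomorphic $(-2)$-spheres; pick one and let $C_j\in H_2(\K;\mathbb Z)$ be its pushforward to $\K$, so $C_j.C_j=-2$. In case (ii), take $C_j$ to be the class of a generic $2$-torus fiber (or of a flat $2$-subtorus inside a $3$-dimensional collapsing fiber), giving $C_j.C_j=0$ by the adjunction formula for a genus-$1$ curve on $\K$. \textbf{Step 4 (the integral bound).} Any holomorphic curve in a hyperk\"ahler $4$-manifold is calibrated by the corresponding K\"ahler form in the twistor family, yielding $\|\int_{C_j}\bm\omega_j\|=\text{Area}_{g_j}(C_j)$. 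In case (i) this area tends to zero at the bubble scale while $v_j^{-2}$ stays bounded, so $\|\int_{C_j}\widetilde{\bm\omega}_j\|\to 0$. In case (ii) a volume-scaling argument (the base of the collapsing fibration has positive measure in the limit while the total volume equals $v_j^4/2$) gives fiber area of order $v_j^4$ or smaller, again beating the $v_j^{-2}$ renormalization.

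The main obstacle is Step 2: correctly invoking the Sun--Zhang structure theorem so as to guarantee that the vanishing class has self-intersection exactly in $\{0,-2\}$ rather than some more negative even value, and verifying the area decay beats the $v_j^{-2}$ rescaling in the deeply collapsed regime. The remainder is essentially bookkeeping; once the Proposition is in hand, the contradiction with $P_\infty\in Gr^{+,\circ}$ will follow by combining $\|\int_{C_j}\widetilde{\bm\omega}_j\|\to 0$ with $\gamma_j.[\widetilde{\bm\omega}_j]\to\bm\eta_\infty$ and the properness of the $\Gamma$-action on $Gr^+$.
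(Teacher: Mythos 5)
Your overall strategy coincides with the paper's: split into the non-collapsed case ($v_j\geq\epsilon>0$), where a curvature blow-up produces an ALE bubble and Kronheimer's classification supplies a $(-2)$-class of small area, and the collapsed case ($v_j\to 0$), where Sun--Zhang's structure theory supplies a vanishing cycle of square $0$ or $-2$. The non-collapsed half of your argument is essentially the paper's. The gaps are all in the collapsed half, and they are genuine.

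First, your Step 4 claim that the ``fiber area is of order $v_j^4$ or smaller'' fails when $\dim X_\infty=1$. There the collapsing fiber is $3$-dimensional with volume $\sim v_j^4$, modeled (in the abelian case) on $S^1(r_{j,1})\times S^1(r_{j,2})\times S^1(r_{j,3})$ with all $r_{j,\alpha}\to 0$; a $2$-subtorus $S^1(r_{j,\alpha})\times S^1(r_{j,\beta})$ has area $r_{j,\alpha}r_{j,\beta}=v_j^4/r_{j,\gamma}\gg v_j^4$, so the area does not beat $v_j^{-2}$ for free. The renormalized period is $r_{j,\alpha}^{1/2}r_{j,\beta}^{1/2}r_{j,\gamma}^{-1/2}$, which tends to zero only for the subtorus spanned by the two \emph{shortest} circles (ordering $r_{j,1}\leq r_{j,2}\leq r_{j,3}$ gives $r_{j,1}^{1/2}r_{j,2}^{1/2}r_{j,3}^{-1/2}\leq r_{j,1}^{1/2}\to 0$); for the other choices it can blow up. Your proposal never makes this selection, so as written the estimate in the deeply collapsed regime does not go through. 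Second, the case $\dim X_\infty=3$ is not covered by ``a torus fiber or a flat $2$-subtorus of a collapsing fiber'' at all: there the fibers are circles, and the vanishing cycle must be assembled globally as the total space of the circle bundle over a closed geodesic in the base, which requires the global result of Sun--Zhang that $X_\infty$ is the flat orbifold $T^3/\mathbb Z_2$ together with the Gibbons--Hawking description; the same issue arises in the Heisenberg (non-abelian nilpotent) subcase of $\dim X_\infty=1$. Finally, to compute periods one should justify why the model computation applies to $\bm\omega_j$ itself; the paper does this by perturbing to a genuinely symmetric triple $\bm\omega_j'$ with $\bm\omega_j-\bm\omega_j'$ exact, so the periods are literally equal --- your calibration identity $\|\int_{C_j}\bm\omega_j\|=\mathrm{Area}_{g_j}(C_j)$ presumes the cycle is holomorphic for the actual (not model) structure, which needs an argument. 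You correctly flagged the collapsed regime as the main obstacle; the above is precisely where the missing work lies.
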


\begin{proof}
 Passing to a subsequence we may assume $(\mathcal K, g_j)$ converges to a Gromov-Hausdorff limit $X_\infty$, which is a compact metric space.

If $v_j\geq \epsilon>0$ for all $j$, then it follows from the classical results (\cite{anderson1989ricci, bando1989construction,tian1990calabi}) that  $X_\infty$ is a hyperk\"ahler orbifold. Let $p_j\in \mathcal K$ be such that $\lambda_j:=\max_{\mathcal K}|Rm(g_j)|$ is achieved at $p_j$. By assumption $\lambda_j\rightarrow\infty$. Then passing to a subsequence we can take a pointed Gromov-Hausdorff limit of $(\mathcal K, p_j, \lambda_j^{1/2}g_j)$ to get a complete  ALE hyperk\"ahler 4-manifold $Z$. By Kronheimer's classification \cite{kronheimer1989torelli} we know $Z$ must contain a homology class $C_\infty$ with $C_\infty.C_\infty=-2$. This gives rise to a sequence of $-2$ class $C_j$ in $\mathcal K$ such that $\|\int_{C_j}\bm\omega_j \|\rightarrow$ 0, so $\|\int_{C_j}\widetilde {\bm \omega}_j\|\rightarrow 0$ as well. Notice here we only need the topological classification in Kronheimer's result.

If $v_j\rightarrow 0$, then 
the conclusion follows from the results of Sun-Zhang \cite{sun2021collapsing}. The point is that away from finitely many points, the hyperk\"ahler triple $\bm\omega_j$(up to $O(3)$ rotations) has almost local nilpotent symmetry, and it can be perturbed  to a new hyperk\"ahler triple $\bm\omega_j'$ such that $\bm\omega_j'$ has local nilpotent symmetry and $\bm\omega_j-\bm\omega_j'$ is exact. In particular, $\bm\omega_j'$ has explicit expression and the integration of $\bm\omega_j'$ over a cycle is the same as the integration of $\bm\omega_j$ over that cycle. We divide into 3 cases. The first two cases only use the analysis over the regular region in \cite{sun2021collapsing}.

\begin{itemize}
    \item $\dim X_\infty=2$.  In this case $\bm\omega_j'$ is locally $T^2$-invariant. Take  $C_j$ to be the class of a $T^2$ fiber. Then we get $\int_{C_j}\omega_{j,1}=\int_{C_j}\omega_{j,2}=0$ and $\int_{C_j} \omega_{j,3}\sim v_j^4$. We get $\|\int_{C_j}\widetilde{\bm \omega}_j\|\sim v_j^2\rightarrow0.$

\item $\dim X_\infty=1$. Locally there are two cases, either there is a $T^3$ symmetry or a Heisenberg symmetry. In the former case  the metric corresponding to  $\bm\omega_j'$ is locally a flat product $S^1(r_{j,1})\times S^1(r_{j,2})\times S^1(r_{j,3})\times I$, where $I$ is an open interval and we assume $r_{j,1}\leq r_{j,2}\leq r_{j, 3}$, then we take $C_j$ to be the homology class  of $S^1(r_{j,1})\times S^1(r_{j,2})$, then $\int_{C_j}\omega_{j,1}=\int_{C_j}\omega_{j,2}=0$ and $0<\int_{C_j}\omega_{j,3}=r_{j, 1}r_{j,2}$ whereas  $v_j^4\sim r_{j,1}r_{j,2}r_{j,3}$. So $\|\int_{C_j}\widetilde{\bm \omega}_j\|\sim r_{j,1}^{1/2}r_{j_2}^{1/2}r_{j,3}^{-1/2}\rightarrow0$.	In the latter case the metric corresponding to $\bm\omega_j'$ is locally given by the Gibbons-Hawking ansatz applied to a nonconstant linear function on $T^2\times I$. Take $C_j$ to be the homology class of the 2-torus given by the total space of the corresponding circle bundle over a circle $S^1$ in $T^2\times I$, then one can arrange that $\int_{C_j}\omega_{j,1}=\int_{C_j}\omega_{j,2}=0$ and $0<\int_{C_j}\omega_{j,3}\sim r_{j,1}r_{j,2}$, where $r_{j,1}$ is the size of the $S^1$ fiber in the Gibbons-Hawking construction, and $r_{j,2}$ is the size of the flat $T^2$ base. Notice the volume  $v_j^4\sim r_{j,1}r_{j,2}^2$. So $\|\int_{C_j}\widetilde{\bm \omega}_j\|\sim r_{j,1}^{1/2}\rightarrow0$.
\item $\dim X_\infty=3$. Here we need some global result from \cite{sun2021collapsing}. It is proved there that $X_\infty$ must be a flat orbifold $T^3/\mathbb Z_2$. Then  $\bm\omega_j'$ is given by Gibbons-Hawking construction on the complement of a small neighborhood of the orbifold points. We can take $C_j$ to be the total space of the circle bundle over a closed geodesic $S^1\subset T^3/\mathbb Z_2$. Then we may arrange that $\int_{C_j}\omega_{j,1}=\int_{C_j}\omega_{j,2}=0$ while $0<\int_{C_j}\omega_{j,3}=r_{j}\sim v_j^4$. So $\|\int_{C_j}\widetilde{\bm \omega}_j\|\sim r_j^{1/2}\rightarrow0.$ 
  \end{itemize}

\end{proof}

 Now we derive a contradiction. Denote $C_j'=\gamma_j^{-1}.C_j$. Since $C_j'$ is integral and non-zero, we know $\|C_j'\|$ has a uniform positive lower bound.  By passing to a subsequence we may assume $\|C_j'\|^{-1}C_j'$ converges to a limit $C_\infty'\in \Lambda_{\mathbb R}$ with $\|C_\infty'\|=1$. First suppose $C_j.C_j=0$, then   $C_\infty'. C_\infty'=0$ and 
 \begin{equation}\label{equality}
\int_{C_\infty'}\bm\eta_\infty=\lim\limits_{j\rightarrow\infty}\|C_j'\|^{-1}\int_{C_j'} \bm\eta_{\infty}=\lim\limits_{j\rightarrow\infty}\|C_j'\|^{-1}\int_{C_j'} \gamma_j.[\widetilde{\bm\omega}_j]=\lim\limits_{j\rightarrow\infty}\|C_j'\|^{-1}\int_{C_j}\widetilde{\bm\omega}_j=0
 \end{equation} 
This contradicts the fact that the intersection form on $\Lambda_{\mathbb R}$ has signature $(3, 19)$. 
Now suppose $C_j. C_j=-2$. If $\|C_j'\|$ is unbounded,
then $C_\infty'.C_\infty'=0$, and we have $\int_{C_\infty'}\bm\eta_\infty=0$ as in (\ref{equality}), hence we obtain a contradiction in the same way. If $\|C_j'\|$ is bounded, by passing to a further subsequence we may assume $C_j'$ converges to a limit $C_\infty''\in \Lambda $ with $C_\infty''. C_\infty''=-2$ and  $\int_{C_\infty''}\bm\eta_\infty=0$. This contradicts to $P_\infty \in Gr^{+,\circ}$ and finishes the proof of Step 3.

Finally, $\underline{\mathcal{P}}:\M\rightarrow \D^\circ $ being proper and $\D^\circ$ being locally compact, Hausdorff imply $\underline{\mathcal{P}}:\M\rightarrow \D^\circ $ is a closed map. Together with the image of $\underline{\mathcal{P}}$ being open, $\mathcal{D}^\circ$ being connected, we conclude $ \underline{\mathcal{P}}:\M\rightarrow \D^\circ $ is surjective.

\end{document}